\newcommand{\calB}{\mathcal{B}}
\newcommand{\calF}{\mathcal{F}}
\newcommand{\calO}{\mathcal{O}}
\newcommand{\calS}{\mathcal{S}}
\newcommand{\calU}{\mathcal{U}}
\newcommand{\calM}{\mathcal{M}}
\newcommand{\calN}{\mathcal{N}}
\newcommand{\calX}{\mathcal{X}}
\newcommand{\dt}{{\,\mathrm{dt}}}
\newcommand{\dx}{{\, \mathrm{dx}}}
\newcommand{\dy}{{\, \mathrm{dy}}}
\newcommand{\deta}{{\,\mathrm{d}\eta}}
\newcommand{\dmu}{{\,\mathrm{d}\mu}}
\newcommand{\dnu}{{\,\mathrm{d}\nu}}
\newcommand{\iid}{{\mbox{i.i.d.}\,}}
\newcommand{\rmP}{\mathrm{P}}
\newcommand{\st}{{:\,}}
\newcommand{\as}{\mbox{a.s.}}
\newcommand{\bfZ}{\mathbf{Z}}
\newcommand{\eps}{\varepsilon}
\newcommand{\one}{\mathbf{1}}
\newcommand{\Cov}[1]{\mbox{Cov}\left(#1\right)}
\newcommand{\Exp}[1]{\E\left\{ #1 \right\}}
\newcommand{\Exparg}[2]{\E_{#1}\left\{ #2 \right\}}
\newcommand{\innerproduct}[2]{{\langle#1,#2\rangle}}
\newcommand{\Var}[1]{\mbox{Var}\left\{ #1 \right\}}
\newcommand{\E}{{\mathbb{E}}}
\newcommand{\g}{\,|\,}
\newcommand{\qbox}[1]{\quad\mbox{#1}\quad}
\newcommand{\dsphere}{{\mathbb{S}^{d-1}}}
\newcommand{\Nat}{\mathbb{N}}
\newcommand{\Rd}{{{\mathbb{R}}^d}}
\newcommand{\Rone}{\mathbb{R}}
\newcommand{\dabseta}{{\,\mathrm{d}|\eta|}}
\newcommand{\dof}{{f}}
\newcommand{\D}{{\mathcal{D}^2}}
\newcommand{\Dtilde}{{\tilde{\mathcal{D}}^2}}
\newcommand{\domega}{{\,\mathrm{d}\omega}}
\newcommand{\etahat}{{\hat{\eta}}}
\newcommand{\muhat}{{\hat{\mu}}}
\newcommand{\nuhat}{{\hat{\nu}}}
\newcommand{\F}{{\calF_0}}
\newcommand{\Vol}{{\mathrm{Vol}}}
\newcommand{\Xbar}{{\bar{X}}}
\newcommand{\Ybar}{{\bar{Y}}}
\newcommand{\support}{{\calS_{U,\calB}}}
\newcommand{\supportspan}{{\mathrm{Span}({\support})}}
\newcommand{\supportspanclosure}{{\overline{\supportspan}}}
\newcommand{\spacenoindent}{{\vspace{1cm}\noindent}}
\newtheorem{theorem}{Theorem}[section]
\newtheorem*{theorem*}{Theorem}
\newtheorem{proposition}[theorem]{Proposition}
\newtheorem{lemma}[theorem]{Lemma} 
\newtheorem{corollary}[theorem]{Corollary}
\theoremstyle{definition}
\newtheorem*{definition*}{Definition}
\newtheorem{definition}{Definition}[section]
\theoremstyle{remark}
\newtheorem*{remark*}{Remark}
\newtheorem*{claim*}{Claim}
\begin{document}
\title{
    Random Field Representations of Kernel Distances
}
\author{
    Ian Langmore\\
    \small Gridmatic Inc.
    \small \texttt{ianlangmore@gmail.com}
}
\maketitle
\abstract{
    Positive semi-definite kernels are used to induce pseudo-metrics, or ``distances'', between measures. We write these as an expected quadratic variation of, or expected inner product between, a random field and the difference of measures. This alternate viewpoint offers important intuition and interesting connections to existing forms. Metric distances leading to convenient finite sample estimates are shown to be induced by fields with dense support, stationary increments, and scale invariance. The main example of this is energy distance. We show that the common generalization preserving continuity is induced by fractional Brownian motion. We induce an alternate generalization with the Gaussian free field, formally extending the Cram\'er-von Mises distance. Pathwise properties give intuition about practical aspects of each. This is demonstrated through signal to noise ratio studies.
}

\spacenoindent
\textbf{Keywords:} Kernel distance, Proper scoring rules, Random fields, Maximum mean discrepancy, Fractional Brownian motion, Gaussian free field, Energy distance.
\tableofcontents

\medskip

\section{Introduction}
\label{section:introduction}
Let $\mu$ and $\nu$ be probability measures on Borel subsets of $\calX\subset\Rd$.
Given positive semi-definite kernel $k:\calX\times\calX\to\Rone$, a pseudo-metric, or (squared) distance, can be defined
\begin{align*}
  \D(\mu - \nu) &= \Exp{k(X, X')} + \Exp{k(Y, Y')} - 2 \Exp{k(X, Y)},
\end{align*}
where $X, X'\sim\mu$, $Y, Y'\sim\nu$, and all four are independent.

This can also be represented as a weighted squared $L^2$ distance between characteristic functions, as a maximum mean discrepancy (MMD)~\cite{Gretton2012-fp}, or the divergence form of a scoring rule~\cite{Gneiting2007-ms}.

We explore an alternative viewpoint where $\D$ is induced by a random field $U$, defined on $\calX$.
\begin{align*}
  \D(\mu - \nu) &= \Exparg{U}{\Exparg{X\sim\mu,Y\sim\nu}{U(X) - U(Y)}^2\g U} \\
  &= \Exparg{U}{\innerproduct{U}{\mu - \nu}^2}.
\end{align*}

We could find no literature expressing this viewpoint, which is somewhat surprising. The closest is the notion of \emph{Brownian distance covariance}~\cite{Szekely2009-ho}.
We hope to convince the reader that this viewpoint provides important intuitions.

As notation suggests, the distance depends only on the difference $\mu-\nu$. Therefore, our assumptions will usually be on spaces of finite signed Borel measures.
These have a unique representation as differences of finite Borel measures.
\begin{align*}
    \F(\calX) :&= \left\{ \eta = \mu - \nu\st \mu, \nu\mbox{ are finite Borel measures on }\calX, \eta(\calX)=0 \right\}.
\end{align*}
This is larger than differences of \emph{probability} measures, but is needed for our support theorems where we want (subsets of ) $\F$ to be a vector space.

The first example of this is also the most important.
\begin{definition}
    We say field $U$ is characteristic over $\calM\subset\F(\calX)$ if, for all $\mu-\nu\in\calM$, $\D(\mu -\nu)=0$ only when $\mu=\nu$.
    \label{def:characteristic}
\end{definition}
This terminology is borrowed from~\cite{Sriperumbudur2009-ia}, where the \emph{kernel} is called characteristic if $\D$ is a metric.

We proceed in section~\ref{section:equivalence-of-representations} by showing equivalence of different representations. Section~\ref{section:crps-energy-fractional-brownian-motion} analyzes energy distance as induced by continuous fractional Brownian fields, and introduces the \emph{Dirichlet energy distance} as a generalization induced by the Gaussian free field. It concludes with a practical application to signal to noise ratio.
Section~\ref{section:other-examples} discusses paths on discrete spaces and additive paths.
Section~\ref{section:natural-assumptions-for-the-field} shows distances will be most useful if sample paths satisfy three requirements. Specifically, if $\mu-\nu$ lives in Banach dual $\calB^\ast$, then sample paths should have wide sense stationary increments, a scale invariance or fractal form, and live in $\calB$.
This places restrictions on available fields, a subject which we discuss throughout.
Finally, section~\ref{section:forgery-theorems} shows that the support of characteristic fields have dense span in $\calB$, and if the fields are Gaussian their support is dense. This allows the support of a number of fields to be characterized.

\section{Different representations and their equivalence}
\label{section:equivalence-of-representations}
Let $U:\Omega\times\calX\to\Rone$ be a random field.  For $\zeta\in\Omega$, sample paths $U(\zeta, \cdot)$ can be generalized functions.
As usual, we suppress reference to $\Omega$ or $\zeta$ when describing random fields.

If $\mu$ is a probability measure, the characteristic function is $\Exparg{X\sim\mu}{e^{i\omega\cdot X}}$. Since we work with finite signed measures, and use the word ``characteristic'' for other meanings, we instead write
\begin{align*}
    \etahat(\omega) :&= \int_\calX e^{i\omega\cdot x}\deta(x)
\end{align*}
and call it the Fourier transform of the measure $\eta$. Since, for $\eta\in\F(\calX)$, $|\eta|(\calX)<\infty$, $\etahat$ is a bounded and uniformly continuous function.

\subsection{Quadratic variation of random field}

Define the squared distance $\D$ as,
\begin{align}
  \D(\mu-\nu) &= \Exparg{U}{\Exparg{X\sim\mu,Y\sim\nu}{U(X) - U(Y)\g U}^2}
  \label{align:distance-stochastic-integral}
\end{align}
We can re-write~\eqref{align:distance-stochastic-integral} to emphasize an inner product between sample paths $U$ and the difference of measures
\begin{align}
  \D(\mu-\nu) &= \Exparg{U}{\innerproduct{U}{\mu-\nu}^2}
  \label{align:distance-stochastic-pairing}
\end{align}
where the pairing can also be written
\begin{align*}
    \innerproduct{U}{\mu - \nu} &:= \int_\calX U(x) (\dmu(x) - \dnu(x)).
\end{align*}

The distance will be highly sensitive to differences $\mu-\nu$ that look similar to ``typical'' paths.
For example, if paths look (almost) linear, then $\D$ will (mostly) measure differences in the first moment.

\subsection{Kernel representation}
By construction, $\Cov{U(x)U(x')}=k(x,x')\in\Rone$ is positive semi-definite. This means, for $n\in\Nat$, $\beta_1,\ldots,\beta_n\in\Rone$, and $x_1,\ldots,x_n\in\calX$,
\begin{align*}
    \sum_{i,j=1}^n \beta_i\beta_jk(x_i,x_j) \geq0.
\end{align*}
In this case, the kernel~\eqref{align:kernel-distance-traditional} and stochastic integral~\eqref{align:distance-stochastic-integral} representations are equivalent.

We show this following steps, which are very similar to the construction of Brownian distance covariance in~\cite{Szekely2009-ho}. Proceeding, with $X,X'\sim\mu$, and $Y,Y'\sim\nu$ (all independent of each other),
\begin{align*}
  &\Exparg{U}{\Exparg{X,Y}{U(X) - U(Y)\g U}^2} \\
  &\quad=\Exparg{U}{\Exparg{X,Y}{U(X) - U(Y)\g U}\Exparg{X',Y'}{U(X')-U(Y')\g U}} \\
&\quad=\Exparg{U}{\Exparg{X,X',Y,Y'}{U(X)U(X') + U(Y)U(Y') - U(X)U(Y') - U(Y)U(X')\g U}} \\
&\quad=\Exparg{X,X',Y,Y'}{\Exparg{U}{U(X)U(X') + U(Y)U(Y') - U(X)U(Y') - U(Y)U(X')\g X,X',Y,Y'}} \\
&\quad=\Exparg{X,X',Y,Y'}{k(X, X') + k(Y, Y') - k(X, Y') - k(Y, X')}.
\end{align*}

Using the symmetry of the kernel, we arrive at the kernel representation
\begin{align}
  \D(\mu-\nu) &= \Exp{k(X, X')} + \Exp{k(Y, Y')} - 2 \Exp{k(X, Y)},
  \label{align:kernel-distance-traditional}
\end{align}
as promised.

\subsection{Fourier representation induced by Wiener measure}
\label{section:fourier-representations}
Here we use a non-negative spectral density $\varphi$ to construct $U$ on $\Rd$, that will be stationary or have stationary increments.
Inserting the pathwise definition of $U$ into~\eqref{align:distance-stochastic-pairing} will lead to an equivalent Fourier~\eqref{align:spectral-representation-of-distance} representation.

Let us start by briefly reviewing Wiener integrals.
Using definitions from~\cite{Cohen2013-zp} 2.1.6.1,
$W(\dx)$ is a real Wiener measure on $\Rd$ if, for $f,g\in L^2(\Rd)$,
\begin{align*}
  \Exp{\left( \int f(x)W(\dx) \right)\left( \int g(x)W(\dx) \right)}
  &= \frac{1}{(2\pi)^{d/2}}\int f(x)g(x)\dx.
\end{align*}
Letting $\hat{f}$ denote the Fourier transform of $f$, the Fourier transform of the Wiener measure $\widehat{W}$ is defined by.
\begin{align*}
  \int f(\omega)\widehat{W}(\domega)
  &= \int \hat{f}(x)W(\dx),
\end{align*}
so that Parseval's identity gives
\begin{align}
  \label{align:brownian-measure-fourier-transform-parseval}
  \Exp{\left( \int f(x)\widehat{W}(\dx) \right)\left( \int g(x)\widehat{W}(\dx) \right)}
  &= \frac{1}{(2\pi)^{d/2}}\int f(x)\overline{g}(x)\dx.
\end{align}

We can construct stationary $U$ by following~\cite{Cohen2013-zp} section 2.1.11 to pair with $\calM\subset\F(\Rd)$ as follows. First, suppose spectral density $\varphi$ satisfies
\begin{align}
    \label{align:spectral-density-assumptions-stationary-fields}
    \begin{split}
        &\omega\mapsto\sqrt{\varphi(\omega)} \in L^2, \\
        &\omega\mapsto\etahat(\omega)\sqrt{\varphi(\omega)} \in L^2
        \qbox{for every}\eta\in\calM.
    \end{split}
\end{align}
Then
\begin{align*}
  U(x) &= (2\pi)^{d/4}\int e^{i\omega\cdot x}\sqrt{\varphi(\omega)}\widehat{W}(\domega)
\end{align*}
and the pairing
\begin{align}
  \label{align:spectral-pairing-of-stationary-fields}
    \innerproduct{U}{\eta}
    &=
    (2\pi)^{d/4}\int \etahat(\omega)\sqrt{\varphi(\omega)}\widehat{W}(\domega),
\end{align}
are well defined.
Furthermore, using~\eqref{align:brownian-measure-fourier-transform-parseval} we have continuous and bounded kernel
\begin{align*}
  k(X, Y)
  &= \Exp{U(X)U(Y)}
  = \int e^{i\omega\cdot (X - Y)}\varphi(\omega)\domega.
\end{align*}

Next we construct $U$ with stationary increments, meaning $U(x) - U(y)\sim U(x + \delta) - U(y + \delta)$, for all $x, y, \delta\in\Rd$, again pairing with $\calM\subset\F(\Rd)$.
Assume spectral density $\varphi$ satisfies
\begin{align}
    \label{align:spectral-density-assumptions-stationary-increment-fields}
    \begin{split}
        \omega&\mapsto \left( e^{i\omega\cdot x} - 1 \right) \sqrt{\varphi(\omega)}\in L^2,
        \qbox{for every} x\in\Rd,\\
        \omega&\mapsto \etahat(\omega) \sqrt{\varphi(\omega)} \in L^2,
        \qbox{for every} \eta\in\calM.
    \end{split}
\end{align}
Then
\begin{align}
    U(x) &= (2\pi)^{d/4}\int (e^{i\omega\cdot x} - 1)\sqrt{\varphi(\omega)}\widehat{W}(\domega),
    \label{align:spectral-representation-of-stationary-increment-fields}
\end{align}
and using $\int 1\deta(x)=\eta(\Rd)=0$, the pairing once again is
\begin{align}
  \label{align:spectral-pairing-of-stationary-increment-fields}
    \innerproduct{U}{\eta} &= (2\pi)^{d/4} \int
    \etahat(\omega)
    \sqrt{\varphi(\omega)}\widehat{W}(\domega),
\end{align}
The factors $(e^{i\omega\cdot x} - 1)$ and $\etahat(\omega)$ both vanish at the origin, which allows $\varphi$ to be unbounded. This is necessary for continuous non-stationary fields that have unbounded covariance at infinity.
The covariance kernel this time is
\begin{align*}
  k(X, Y)
  &= \int \left( e^{i\omega\cdot X} - 1 \right)\left( e^{-i\omega\cdot Y} - 1 \right)\varphi(\omega)\domega.
\end{align*}

Plugging either~\eqref{align:spectral-pairing-of-stationary-fields} or~\eqref{align:spectral-pairing-of-stationary-increment-fields} into~\eqref{align:distance-stochastic-integral}, and using~\eqref{align:brownian-measure-fourier-transform-parseval}, we have the Fourier representation of the distance
\begin{align}
  \begin{split}
    \D(\mu-\nu)
    &= \E_{W,W'}\left\{
      (2\pi)^{d/2}
  \left(\int\left(\muhat(\omega)-\nuhat(\omega)\right)\sqrt{\varphi(\omega)}\widehat{W}(\domega)\right) \right. \\
  &\qquad\qquad\left.\times\left(\int\left(\muhat(\omega)-\nuhat(\omega)\right)\sqrt{\varphi(\omega)}\widehat{W'}(\domega)\right)
  \right\} \\
  &= \int \left|\muhat(\omega)-\nuhat(\omega)\right|^2 \varphi(\omega)\domega.
  \end{split}
  \label{align:spectral-representation-of-distance}
\end{align}

\eqref{align:spectral-representation-of-distance} provides a well known way to check if $\D$ is characteristic.
\begin{proposition}
  \label{proposition:spectral-criteria-for-characteristic}
  Let $\calM\subset\F(\Rd)$.
  Suppose Gaussian field $U$ is stationary with spectral density satisfying~\eqref{align:spectral-density-assumptions-stationary-fields}, or has stationary increments with density satisfying~\eqref{align:spectral-density-assumptions-stationary-increment-fields}. Then $U$ is characteristic over $\calM$ if and only if $\varphi > 0$ on the support of $\calM$.
\end{proposition}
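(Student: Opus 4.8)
The plan is to reduce everything to the Fourier representation of the distance. Formula~\eqref{align:spectral-representation-of-distance} holds for both constructions allowed in the hypothesis --- stationary with density obeying~\eqref{align:spectral-density-assumptions-stationary-fields}, or with stationary increments and density obeying~\eqref{align:spectral-density-assumptions-stationary-increment-fields} --- and reads
\begin{align*}
  \D(\mu-\nu) = \int_{\Rd}\left|\muhat(\omega)-\nuhat(\omega)\right|^2\varphi(\omega)\domega .
\end{align*}
Writing $\eta:=\mu-\nu\in\calM$ and using that the integrand is non-negative, $\D(\mu-\nu)=0$ if and only if $\etahat(\omega)^2\varphi(\omega)=0$ for Lebesgue-almost every $\omega$. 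I would keep two auxiliary facts at hand: $\etahat$ is continuous (Section~\ref{section:equivalence-of-representations}), and the Fourier transform is injective on finite signed measures, so that $\etahat\equiv0$ already forces $\mu=\nu$. Finally, write $S:=\supp\calM$ for the closed set of active frequencies, so that $\supp\etahat\subseteq S$ for every $\eta\in\calM$.

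For the ``if'' direction, suppose $\varphi>0$ on $S$ and let $\eta=\mu-\nu\in\calM$ satisfy $\D(\mu-\nu)=0$. The set $O:=\{\omega\st\etahat(\omega)\neq0\}$ is open and contained in $S$, hence $\varphi>0$ throughout $O$ and therefore $\etahat^2\varphi>0$ pointwise on $O$. Since $\etahat^2\varphi=0$ almost everywhere, $O$ must have Lebesgue measure zero; being open, $O=\emptyset$, so $\etahat\equiv0$ and $\mu=\nu$. Thus $U$ is characteristic over $\calM$. In the stationary-increment case one has in addition $\etahat(0)=\eta(\Rd)=0$; I would mention this only to record that a possible singularity of $\varphi$ at the origin plays no role here.

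For the ``only if'' direction I would argue by contraposition: assuming $\varphi$ is not strictly positive on $S$, I must exhibit a nonzero $\eta\in\calM$ with $\D(\mu-\nu)=0$, equivalently with $\etahat$ vanishing almost everywhere on $\{\varphi>0\}$. Continuity of $\etahat$ upgrades this to $\supp\etahat\cap\{\varphi>0\}=\emptyset$, i.e.\ $\supp\etahat\subseteq\{\varphi=0\}$. So the construction needs a frequency $\omega_0\in S$ in a neighborhood of which $\varphi$ vanishes, together with enough elements in $\calM$ to realize a nonzero measure whose Fourier transform is supported in that neighborhood --- for example a difference of two slightly translated Fej\'er-type densities, whose Fourier transform is a compactly supported bump that can be dilated and recentered to fit inside $\{\varphi=0\}$. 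Carrying this out cleanly is the only genuinely delicate point, and it is precisely where the definition of the support of $\calM$ matters: that definition must be calibrated so that a zero of $\varphi$ lying in $S$ forces a neighborhood, ``seen'' by $\calM$, on which $\varphi$ vanishes. I expect this reconciliation of the support convention with the existence of the witness --- rather than any analytic estimate --- to be the main obstacle, and I would make the convention explicit in the statement.
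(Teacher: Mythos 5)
The paper does not actually supply a proof of this proposition; it is asserted as ``well known'' with the Fourier representation~\eqref{align:spectral-representation-of-distance} as the implicit justification. Your reduction to
\begin{align*}
  \D(\mu-\nu) = \int_{\Rd}\left|\etahat(\omega)\right|^2\varphi(\omega)\domega
\end{align*}
is therefore exactly the intended route, and your ``if'' direction is complete and correct: non-negativity of the integrand, openness of $\{\etahat\neq0\}$, and injectivity of the Fourier transform on finite signed measures together give $\mu=\nu$. Your aside about the origin in the stationary-increment case is also apt, since $\etahat(0)=\eta(\Rd)=0$ for every $\eta\in\F(\Rd)$.

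The ``only if'' direction is where the gap lies, and you have correctly diagnosed it but not closed it --- and in fact it cannot be closed for the statement as literally written. Two separate problems. First, the condition ``$\varphi>0$ on the support of $\calM$'' is too strong to be \emph{necessary}: if $\varphi$ vanishes only on a Lebesgue-null subset of that support (say at a single frequency $\omega_0\neq0$), then $\int|\etahat|^2\varphi=0$ still forces $\etahat=0$ almost everywhere, hence everywhere by continuity, hence $\mu=\nu$; so $U$ is characteristic even though $\varphi$ is not strictly positive there. The correct necessary-and-sufficient condition is that the \emph{support of the measure} $\varphi\domega$ contain the relevant frequencies, equivalently $\varphi>0$ a.e.\ there. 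Second, because the proposition allows an arbitrary subset $\calM\subset\F(\Rd)$, the witness you propose (a difference of translated Fej\'er-type densities with Fourier transform localized in $\{\varphi=0\}$) need not belong to $\calM$; the converse genuinely requires $\calM$ to be rich enough to contain such an element, which is an additional hypothesis the statement does not make. So your instinct that ``the reconciliation of the support convention with the existence of the witness'' is the main obstacle is right, but the resolution is that the proposition needs to be restated (a.e.-positivity, plus either $\calM$ closed under the relevant localizations or the claim restricted to the ``if'' direction), not that a cleverer construction will do. As it stands, your proof establishes the forward implication rigorously and honestly flags the converse as unproved, which is more than the paper itself does.
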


The stationary spectrum requirements are easily met if $\varphi\in L^1$.
We also have an easier way to check the non-stationary requirements~\eqref{align:spectral-density-assumptions-stationary-increment-fields}.
\begin{lemma}
    Suppose there exists $\kappa\in[0, 1]$, $C_\kappa<\infty$ such that
    \begin{align*}
        &\int_{\|\omega\|\leq1} |\varphi(\omega)| \|\omega\|^{2\kappa}\domega < \infty,
        \qquad
        \int_{\|\omega\|> 1}\varphi(\omega)\domega < \infty, \\
        &\quad\int \|x\|^\kappa d|\eta|(x) < C_\kappa.
    \end{align*}
    Then,~\eqref{align:spectral-density-assumptions-stationary-increment-fields} are met.
    \label{lemma:meeting-spectral-density-assumptions-with-moments}
\end{lemma}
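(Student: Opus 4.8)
The plan is to obtain both membership statements in~\eqref{align:spectral-density-assumptions-stationary-increment-fields} from one elementary pointwise estimate on the oscillatory factors together with a split of $\Rd$ into the low-frequency ball $\{\|\omega\|\le 1\}$ and its complement. The estimate is: for every real $\theta$ and every $\kappa\in[0,1]$,
\[
    |e^{i\theta}-1| = 2\bigl|\sin(\theta/2)\bigr| \le \min(2,|\theta|) \le 2^{1-\kappa}|\theta|^\kappa ,
\]
where the last inequality is checked by treating $|\theta|\le 2$ and $|\theta|>2$ separately. Taking $\theta=\omega\cdot x$ and using $|\omega\cdot x|\le\|\omega\|\,\|x\|$ yields
\[
    |e^{i\omega\cdot x}-1| \le 2^{1-\kappa}\|\omega\|^\kappa\,\|x\|^\kappa ,
\qquad\text{so}\qquad
    |e^{i\omega\cdot x}-1|^2 \le 2^{2-2\kappa}\|\omega\|^{2\kappa}\,\|x\|^{2\kappa}.
\]

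For the first condition, fix $x\in\Rd$ and split $\int |e^{i\omega\cdot x}-1|^2\varphi(\omega)\domega$. On $\{\|\omega\|\le 1\}$ the last display bounds the integrand by $2^{2-2\kappa}\|x\|^{2\kappa}\|\omega\|^{2\kappa}|\varphi(\omega)|$, whose integral is finite by the first hypothesis. On $\{\|\omega\|>1\}$ I use the crude bound $|e^{i\omega\cdot x}-1|^2\le 4$, so that piece is at most $4\int_{\|\omega\|>1}\varphi(\omega)\domega<\infty$. Adding the two contributions shows $\omega\mapsto(e^{i\omega\cdot x}-1)\sqrt{\varphi(\omega)}\in L^2$.

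For the second condition, the key point is that $\eta\in\calM\subset\F(\Rd)$ has total mass zero, so $\int 1\,\deta(x)=0$ and hence $\etahat(\omega)=\int_\calX(e^{i\omega\cdot x}-1)\,\deta(x)$. The triangle inequality for integrals together with the pointwise estimate gives
\[
    |\etahat(\omega)| \le \int |e^{i\omega\cdot x}-1|\dabseta(x) \le 2^{1-\kappa}\|\omega\|^\kappa\int\|x\|^\kappa\dabseta(x) \le 2^{1-\kappa}C_\kappa\|\omega\|^\kappa ,
\]
while also $|\etahat(\omega)|\le\abseta(\Rd)<\infty$ trivially. Splitting $\int|\etahat(\omega)|^2\varphi(\omega)\domega$ exactly as above: on $\{\|\omega\|\le 1\}$ the integrand is at most $2^{2-2\kappa}C_\kappa^2\|\omega\|^{2\kappa}|\varphi(\omega)|$, integrable by the first hypothesis; on $\{\|\omega\|>1\}$ it is at most $\abseta(\Rd)^2\varphi(\omega)$, integrable by the second. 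Hence $\omega\mapsto\etahat(\omega)\sqrt{\varphi(\omega)}\in L^2$, completing the proof.

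There is no substantive obstacle here; the only things to get right are the $\kappa$-dependent interpolation bound on $|e^{i\theta}-1|$ — which is what lets a single hypothesis control the behaviour both near $\omega=0$, where $\varphi$ is allowed to be unbounded, and at infinity, where only integrability of $\varphi$ off the origin is assumed — and the use of $\eta(\Rd)=0$ so that $\etahat$ vanishes at the origin at the same rate as $e^{i\omega\cdot x}-1$.
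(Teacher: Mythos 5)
Your proof is correct and follows essentially the same route as the paper's: the interpolation bound $|e^{i\omega\cdot x}-1|\le 2^{1-\kappa}\|\omega\|^{\kappa}\|x\|^{\kappa}$, the identity $\etahat(\omega)=\int(e^{i\omega\cdot x}-1)\deta(x)$ from $\eta(\Rd)=0$, and the split of the frequency integral at $\|\omega\|=1$. The only difference is cosmetic — you carry out the first condition explicitly where the paper dismisses it as ``similar,'' and you bound $|\etahat|$ by $\abseta(\Rd)$ rather than $2\abseta(\Rd)$ on the high-frequency region.
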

\begin{proof}
    Since, for $p\in[0, 1]$, $\min(a, b) \leq a^p b^{1-p}$,
    we may use~\cite{Durrett2019-fw} lemma 3.3.19 to see
    \begin{align*}
        \left| e^{i\omega\cdot x} - 1 \right|
        &\leq \min\left( |\omega\cdot x|, 2 \right)
        \leq 2^{1-p}|\omega\cdot x|^p
        \leq 2^{1-p}\|\omega\|^p\|x\|^p.
    \end{align*}
    we have
    \begin{align*}
        |\etahat(\omega)|
        &=\left|
        \etahat(\omega) - \eta(\Rd)
        \right|
        =\left|
        \int \left( e^{i\omega\cdot x} - 1 \right)\deta(x)
        \right|
        \leq
        2^{1-\kappa}
        \|\omega\|^\kappa C_\kappa,
    \end{align*}
    and of course $|\etahat(\omega)|\leq 2 |\eta|(\Rd)$. Putting these together
    \begin{align*}
        &\int_{\|\omega\| \leq 1}\left| \etahat(\omega) \right|^2\varphi(\omega)\domega
        + \int_{\|\omega\| > 1}\left| \etahat(\omega) \right|^2\varphi(\omega)\domega \\
        &\qquad\leq
        2^{2(1-\kappa)}C_\kappa^2 \int_{\|\omega\|\leq 1}\|\omega\|^{2\kappa}\varphi(\omega)\domega
    + 2^2(|\eta|(\Rd))^2\int_{\|\omega\|> 1}\varphi(\omega)\domega < \infty.
    \end{align*}
    This proves the second condition of~\eqref{align:spectral-density-assumptions-stationary-increment-fields}. The first is similar.
\end{proof}

\section{Brownian distance and its generalizations}
\label{section:crps-energy-fractional-brownian-motion}

\subsection{One dimensional Brownian distance}
For $-\infty < t < \infty$, let $B(t)$ be a two-sided Brownian motion. This is constructed by patching together two independent one sided motions. That is  $B(t) = B_1(t)$ if $t\geq0$, or $B_2(-t)$ if $t<0$. One may also simply define $B$ as a Gaussian random field with
\begin{align*}
  B(0) &= 0,
  \quad
  \E B(t)\equiv 0,
  \qbox{and}
  \E B(t)B(s) = \min(t,s) = (|t| + |s| - |t - s|) / 2.
\end{align*}

The corresponding kernel distance is found using~\eqref{align:kernel-distance-traditional},
\begin{align}
  \D(\mu-\nu; B) &= \E|X - Y| - (1/2)\,\E|X - X'| - (1/2)\,\E|Y - Y'|,
  \label{align:energy-distance-1d}
\end{align}
with $X,X'\sim\mu$, $Y, Y'\sim\nu$.
Integrating by parts and using It\^{o}'s isometry, we have equivalence to the Cram\'er-von Mises distance.
\begin{align}
  \begin{split}
    \D(\mu-\nu; B) &=
    \Exparg{B}{
      \left( 
        \int_{-\infty}^\infty B(t) (\dmu(t) - \dnu(t))
      \right)^2
    } \\
    &=\Exparg{B}{
      \left( 
        \int_{-\infty}^\infty (\mu(-\infty, t) - \nu(-\infty, t))dB(t)
      \right)^2
    } \\
    &= \int_{-\infty}^\infty (\mu(-\infty, t) - \nu(-\infty, t))^2\dt.
  \end{split}
  \label{align:energy-distance-1d-ito-isometry}
\end{align}

\subsection{Extending with continuous fractional fields}
\label{section:extending-with-continuous-fractional-fields}
To extend Brownian distance while preserving the form of the kernel representation~\eqref{align:energy-distance-1d}, we replace Brownian motion with continuous fractional fields.

Following~\cite{Cohen2013-zp} section 3.3.1, for \emph{Hurst index} $H\in(0, 1)$
we define $B^H$ to be the Gaussian field with stationary increments on $\Rd$, $B^H(0)=0$, $\E B^H\equiv0$, and covariance
\begin{align*}
  \Exp{B^H(x)B^H(y)} &= \frac{1}{2}\left( \|x\|^{2H} + \|y\|^{2H} - \|x - y\|^{2H} \right).
\end{align*}
The spectral density in the sense of~\eqref{align:spectral-representation-of-stationary-increment-fields} is
\begin{align*}
  \varphi(\omega) &= \frac{1}{(2\pi)^{d/2}C_H}\frac{1}{\|\omega\|^{d + 2H}},
  \qbox{where}
  C_H := \frac{\pi^{1/2}\Gamma(H + 1/2)}{2^{d/2} H \Gamma(2H) \sin(\pi H)\Gamma(H + d/2)}.
\end{align*}
This results in kernel and Fourier representations
\begin{align}
  \begin{split}
    \D(\mu-\nu; B^H)
    &= \E \|X - Y\|^{2H} - (1/2)\E\|X - X'\|^{2H} - (1/2)\E\|Y - Y'\|^{2H}, \\
    &= \frac{1}{(2\pi)^{d/2}C_H}\int\frac{\left| \muhat(\omega)-\nuhat(\omega) \right|^2}{\|\omega\|^{d + 2H}}\domega.
  \end{split}
\end{align}
Using proposition~\ref{proposition:spectral-criteria-for-characteristic} and lemma~\ref{lemma:meeting-spectral-density-assumptions-with-moments} we have
\begin{corollary}
  \label{corollary:continuous-fractional-fields-are-characteristic}
    For $H<\kappa\leq1$, the continuous fractional fields $B^H$, $H\in(0, 1)$ are characteristic over
    \begin{align*}
        \calM :&= \left\{ \eta\in\F(\Rd)\st\int\|x\|^\kappa\dabseta(x)<\infty \right\}.
    \end{align*}
\end{corollary}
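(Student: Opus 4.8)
The plan is to verify the hypotheses of Lemma~\ref{lemma:meeting-spectral-density-assumptions-with-moments} for the explicit fractional spectral density $\varphi(\omega) = \bigl((2\pi)^{d/2}C_H\bigr)^{-1}\|\omega\|^{-(d+2H)}$, which then gives that the stationary-increment spectral requirements~\eqref{align:spectral-density-assumptions-stationary-increment-fields} hold for every $\eta\in\calM$, and finally to invoke Proposition~\ref{proposition:spectral-criteria-for-characteristic} to conclude the field is characteristic over $\calM$.

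First I would fix $\kappa\in(H,1]$ (as in the statement) and check the three integral conditions of Lemma~\ref{lemma:meeting-spectral-density-assumptions-with-moments}. The near-origin condition is $\int_{\|\omega\|\leq1}\|\omega\|^{-(d+2H)}\|\omega\|^{2\kappa}\domega<\infty$; passing to polar coordinates this is a constant times $\int_0^1 r^{-(d+2H)+2\kappa}r^{d-1}\dr = \int_0^1 r^{2\kappa-2H-1}\dr$, which converges precisely because $2\kappa-2H-1 > -1$, i.e. $\kappa>H$. The tail condition $\int_{\|\omega\|>1}\|\omega\|^{-(d+2H)}\domega<\infty$ becomes $\int_1^\infty r^{-2H-1}\dr<\infty$, which holds since $H>0$. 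The third condition, $\int\|x\|^\kappa\dabseta(x)<C_\kappa<\infty$, is exactly the defining property of membership in $\calM$. Hence Lemma~\ref{lemma:meeting-spectral-density-assumptions-with-moments} applies and~\eqref{align:spectral-density-assumptions-stationary-increment-fields} is satisfied, so the field $B^H$ and its pairings with elements of $\calM$ are well defined.

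Next, to apply Proposition~\ref{proposition:spectral-criteria-for-characteristic} I need $B^H$ to be a Gaussian field with stationary increments — which it is, by construction from the Wiener measure as in~\eqref{align:spectral-representation-of-stationary-increment-fields} — and I need $\varphi>0$ on the support of $\calM$. Since $\varphi(\omega)=\bigl((2\pi)^{d/2}C_H\bigr)^{-1}\|\omega\|^{-(d+2H)}$ is strictly positive on all of $\Rd\setminus\{0\}$ (the constant $C_H$ is finite and positive for $H\in(0,1)$, being a product/quotient of positive $\Gamma$ and $\sin$ values), the positivity requirement holds everywhere relevant. Proposition~\ref{proposition:spectral-criteria-for-characteristic} then yields that $B^H$ is characteristic over $\calM$, which is the claim.

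I do not anticipate a serious obstacle here: the corollary is essentially a bookkeeping exercise combining the two preceding results. The only points requiring a moment's care are (i) confirming the polar-coordinate exponent arithmetic so that the role of the constraint $\kappa>H$ is transparent, and (ii) noting that $C_H>0$ and finite for the full range $H\in(0,1)$ so that $\varphi$ is a genuine positive density — the $\sin(\pi H)$ factor vanishes only at the endpoints $H=0,1$, which are excluded. One might also remark that since $\kappa\leq1$, such $\eta$ with $\int\|x\|^\kappa\dabseta<\infty$ form a nonempty and natural class (e.g. differences of compactly supported measures), so the statement is not vacuous.
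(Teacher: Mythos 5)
Your proposal is correct and follows exactly the route the paper intends: the paper gives no written proof beyond ``Using Proposition~\ref{proposition:spectral-criteria-for-characteristic} and Lemma~\ref{lemma:meeting-spectral-density-assumptions-with-moments} we have,'' and you have simply filled in the polar-coordinate arithmetic (the exponent $2\kappa-2H-1>-1$ near the origin, $-2H-1<-1$ at infinity) and the strict positivity of $\varphi$ that make that one-line citation work. No gaps.
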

This result is well-known for the kernel representation~(\cite{Szekely2013-qh}), which requires the stricter $\int \|x\|^{2H}\dabseta(x)< \infty$.

It is apparent that along any line through the origin, $B^H$ behaves as a one dimensional fractional motion. In fact, the restriction of $B^H$ to any $k$ dimensional plane is a shifted version of a $k-$dimensional fractional field.

The increments of $B^H$ satisfy
\begin{align*}
  &\Exp{(B^H(t) - B^H(s))(B^H(t')-B^H(s'))} \\
  &\quad= \frac{1}{2} \left( |s' - t|^{2H} + |t' - s|^{2H} - |t' - t|^{2H} - |s' - s|^{2H} \right).
\end{align*}
Supposing the increments do not overlap, $s<t<s'<t'$, they are positively correlated when $H > 1/2$, negatively when $H < 1/2$, and independent when $H=1/2$~\cite{Shevchenko2015-wg}.
Further analysis shows that $B^H$ exhibits long range dependence just when $H > 1/2$~\cite{Ryvkina2013-ax}.
Traces of fractional motion are shown in figure~\ref{fig:fractional-brownian-1d-traces}. We see small Hurst index corresponds to fast decorrelation, and larger index to nearly linear behavior.
These pathwise behaviors hint that when $H$ is small (large), $\innerproduct{B^H}{\mu-\nu}^2$ will be more sensitive to even (odd) moments. This can be studied in the limits.
\begin{figure}[h] 
  \centering 
  \includegraphics[
	  width=0.9\textwidth,  
  ]{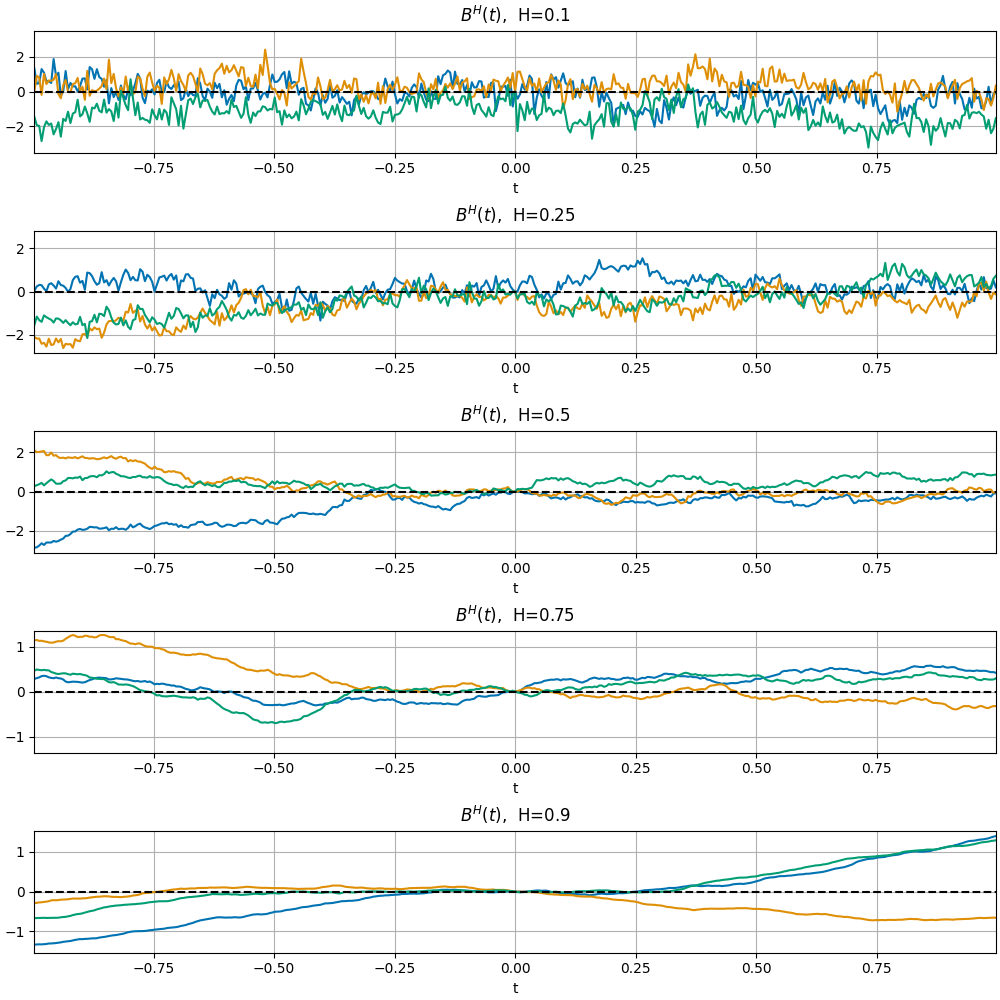} 
  \caption{\textbf{Traces of fBM:} Smaller Hurst index $H$ results in a field that decorrelates quickly, and approximates a (random) constant plus stationary noise. $H=0.5$ corresponds to Brownian motion. As $H\to1$, the traces become linear with random slope.}
    \label{fig:fractional-brownian-1d-traces}
\end{figure} 

As $H\to1$, $\Exp{B^H(x)B^H(y)}\to x\cdot y$. Therefore, on bounded sets, $B^H\to L$ in distribution, where $L$ is the linear process $L(x) \sim \bfZ\cdot x$, with $\bfZ\sim\calN(0, I_d)$.
We therefore have, as $H\to1$, for finite signed Borel measure $\eta$,
\begin{align}
  \label{align:fractional-distance-large-h-limit}
  \Exparg{B^H}{\innerproduct{B^H}{\eta}^2}&\to
  \Exparg{\bfZ}{\left( \sum_{i=1}^d \bfZ_i \int x_i\deta(x) \right)^2}
  &= \sum_{i=1}^d\left( \int x_i\deta(x) \right)^2 .
\end{align}
In particular,~\eqref{align:fractional-distance-large-h-limit} shows that $\Exparg{B^H}{\innerproduct{B^H}{\mu-\nu}^2} \to \|\Xbar-\Ybar\|^2$, as $H\to1$.

On the other hand, for fixed $x\neq y$, as $H\to0$, $\Exp{B^H(x)B^H(y)}\to 1/2$, whereas $\Exp{B^H(x)^2}\to1$. Therefore, on bounded sets, for $0 < H \ll 1/2$, we expect $B^H$ to behave like $(Z + V(x))/\sqrt{2}$, where $Z\sim\calN(0, 1)$ and $V(x)$ approximates the stationary discrete white noise process, $\E V(x)V(y)=\one_{\|x - y\|<\eps}$, for some small correlation length $\eps\to0$ as $H\to0$.
For continuous $q\in L^1\cap L^2$, as $H\to0$,
\begin{align}
  \label{align:fractional-distance-small-h-limit}
  \begin{split}
    \Exparg{B^H}{\innerproduct{B^H}{q}^2}
    &\to \Exparg{Z,V}{\left( Z\int q(x)\dx + \int q(x)V(x)\dx \right)^2}
    = \frac{1}{2}\left( \int q(x)\dx \right)^2.
  \end{split}
\end{align}
In particular,~\eqref{align:fractional-distance-small-h-limit} shows that, if $\mu$ and $\nu$ have densities $\in C(\Rd)\cap L^1(\Rd)$, $\Exparg{B^H}{\innerproduct{B^H}{\mu-\nu}^2}\to 0$ (consistent with the ``signal'' in figure~\ref{fig:snr-sweep}).

Using the limiting forms, we can gain intuition about which moments will contribute most to $\D$.
First, use~\eqref{align:fractional-distance-large-h-limit} and assume $\eta$ has density $q_m(t) = \one_{t\in[-1,1]}t^m$. Then, as $H\to1$,
\begin{align*}
  \Exparg{B^H}{\innerproduct{B^H}{q_m}^2}
  &\to
  \left\{ 
    \begin{matrix}
      4 / (m+2)^2,&\qbox{$m$ is odd,}\\
      0,&\qbox{$m$ is even.}
    \end{matrix}
\right.
\end{align*}
Second, with~\eqref{align:fractional-distance-small-h-limit} we can explicitly compute the small $H$ limit of moments of $U$ on $[-1, 1]$,
and, as $H\to0$,
\begin{align*}
  \Exparg{B^H}{\innerproduct{B^H}{q_m}^2}
  &\to 
  \left\{
    \begin{matrix}
        0,&\qbox{$m$ is odd,}\\
        2 / (m + 1)^2,&\qbox{$m$ is even.}
    \end{matrix}
  \right.
\end{align*}
So we expect even moments to influence the score more for $0 < H \ll 1/2$, and odd for $1/2 \ll H < 1$. This is confirmed in figure~\ref{fig:fractional-brownian-1d-moments}.
\begin{figure}[h]
  \centering
  \includegraphics[width=0.9\textwidth]{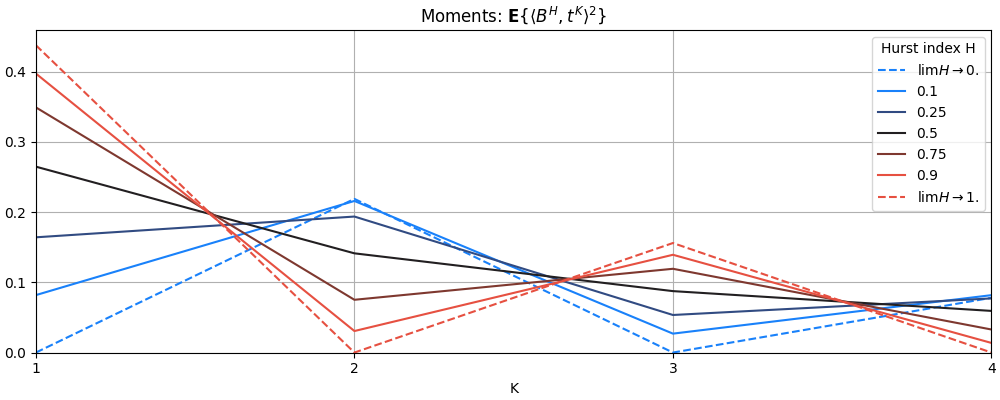}
  \caption{\textbf{Moments of fractional motion:} Shows that, for smaller $H < 1/2$, $\innerproduct{B^H}{t^k}$ is larger for even $k$. For $H > 1/2$, $\innerproduct{B^H}{t^k}$ is larger for odd $k$.}
    \label{fig:fractional-brownian-1d-moments}
\end{figure}

\subsection{Extending with Gaussian free fields}
\label{section:extending-with-GFF}
To extend Brownian distance while preserving the Cram\'er-von Mises integral form~\eqref{align:energy-distance-1d-ito-isometry}, we replace Brownian motion with Gaussian free fields (GFF). We call the result \emph{Dirichlet energy distance}.

The Gaussian free field (GFF) may be defined as the centered Gaussian field whose covariance is the Green's function of the Laplacian. For $d > 1$, the variance is unbounded, and the GFF takes values in a space of generalized functions.

When $\calX=\Rd$, this means $\Cov{G(x)G(y)}=N_d(x, y)$, where
\begin{align*}
  \begin{split}
    N_1(x, y) &= \frac{1}{2}\left[ |x| + |y| - |x - y| \right],
    \quad
    N_2(x, y) = \frac{1}{2\pi}\log\frac{1}{\|x - y\|},
    \\
    N_d(x, y) &= \frac{1}{(d-2)\Vol(\dsphere)}\frac{1}{\|x - y\|^{d-2}}.
    \qbox{($d>2$)},
  \end{split}
\end{align*}
Our choice of $N_1$ is atypical, but done so that $G$ is Brownian motion.

If $\calX=\Rd$,
let $\phi$ be the potential solving $\Delta\phi=0$, and $\phi(x)\to0$ as $\|x\|\to\infty$. Define the gradient field $Q:=\nabla\phi$.
Then so long as Green's identities hold,
\begin{align}
  \label{align:dirichlet-energy-distance}
  \D(X, Y; G)
  &= \int_\calX\int_\calX q(x)q(y) N_d(x, y)\dx\dy 
  = \int_\calX \|Q(x)\|^2\dx,
\end{align}
is the Dirichlet energy.
We can apply Fubini and obtain~\eqref{align:dirichlet-energy-distance} if $\int |q(x)x|\dx<\infty$ for $d=1$, $\int |q(x)|\log\|x\|\dx < \infty$ when $d=2$, and $q\in L^2\cap L^1$ for $d\geq3$ using a Hardy-Littlewood-Sobolev inequality~\cite{Lieb1983-vd}.
In that case, the Dirichlet energy is characteristic over $\mu-\nu$ with density $q$ satisfying the dimension dependent conditions.
In the electromagnetic analog, $Q$ is an electric field induced by charge distribution $q$.

In one dimension, $Q(x) = \int_{-\infty}^x q(t)\dt = \mu(-\infty, x) - \nu(-\infty, x)$ is the difference of cumulative distribution functions, and then \eqref{align:energy-distance-1d-ito-isometry} matches \eqref{align:dirichlet-energy-distance}.
In multiple dimensions we have replaced integration by parts in~\eqref{align:energy-distance-1d-ito-isometry} with Green's identities.

If $\calX$ is compact, we can make precise statements about sample paths.  Similar to~\cite{Werner2020-sl} section 3, we have an orthonormal basis for $L^2$, $\left\{ \psi_k \right\}$, with $(-\Delta)\psi_k = \lambda_k\psi_k$. Unlike~\cite{Werner2020-sl}, our $\psi_k$ satisfy a \emph{Neumann} boundary condition, so $\lambda_0=0$, $\psi_0$ is constant, and on the other hand when $k>0$, $\lambda_k > 0$ and $\int_\calX \psi_k\dx=0$.  Since we only integrate our GFF against a difference of measures $\mu-\nu$ satisfying $(\mu-\nu)(\calX)=0$, we can drop the $\lambda_0$ term and represent the GFF by
\begin{align*}
  G(x) &= \sum_{k=1}^\infty \frac{\xi_k}{\lambda_k^{1/2}}\psi_k(x),
\end{align*}
where $\xi_k$ are \iid scalar Gaussians.

For $s > d/2 - 1$, let $K^s$ be the completion of smooth functions with $\int_\calX q(x)\dx=0$, under the norm
\begin{align*}
  \|q\|_s^2 &= \sum_{k=1}^\infty \lambda_k^s \left( \int_\calX q(x)\psi_k(x)\dx \right)^2.
\end{align*}
For $q\in K^s$, set $q_k := \int_\calX q(x)\psi_k(x)\dx$, and then, as in~\cite{Werner2020-sl},
\begin{align*}
  \innerproduct{G}{q}
  &= \sum_{k=1}^\infty \frac{\xi_k}{\lambda_k^{1/2}}q_k
  \leq \left(\sum_{k=1}^\infty \frac{\xi_k^2}{\lambda_k^{1+s}} \right)^{1/2}
  \left( \sum_{k=1}^\infty \lambda_k^sq_k \right)^{1/2}
  \leq C(s) \|q\|_s,
\end{align*}
with finite~\as~$C(s)$ depending only on $s$ and the realization of $G$.
Therefore, $G$ is characteristic over $\mu-\nu$ with densities in $K^s$ and sample paths~\as~are in the dual $K^{-s}$.

\subsection{Summary of fractional field distances}
\label{section:summary-of-fractional-distances}
We can use centered continuous fields (section~\ref{section:extending-with-continuous-fractional-fields}) with stationary increments
\begin{align*}
  \Exp{B^H(X)B^H(Y)} &\propto \|X\|^{2H} + \|Y\|^{2H} - \|X - Y\|^{2H}
  \longleftrightarrow
  \varphi(\omega) \propto \frac{1}{\|\omega\|^{d+2H}}.
\end{align*}
These are characteristic when $\mu-\nu$ have finite $\kappa$ moments, and $H < \kappa$.
Note that the kernel representation is only finite once $2H\leq\kappa$.
Standard energy distance uses $H=1/2$.

We could have extended our Gaussian free fields using the fractional Laplacian and Riesz potentials. Then, we would have centered, stationary, Gaussian fractional free fields, which, for $d\geq3$ take the form
\begin{align*}
  \Exp{G^\alpha(X)G^\alpha(Y)} &\propto \|X - Y\|^{2\alpha - d}
  \longleftrightarrow
  \varphi(\omega) \propto \frac{1}{\|\omega\|^{2\alpha}},
\end{align*}
which are characteristic for smooth $\mu-\nu\in K^s$, for $s > d/2 - \alpha$, and $0<\alpha<d$.
Standard Dirichlet energy distance uses $\alpha=1$.

To demonstrate the utility of these different fields, consider the centered multivariate student T.  Given degrees of freedom $\dof>0$, and scale matrix $\Gamma^{1/2}$, this has density
\begin{align*}
    g(x) &\propto \left[ 1 + \frac{1}{\dof} x\cdot \Gamma^{-1}x \right]^{-(\dof+d)/2}.
\end{align*}
This distribution has finite $k^{th}$ moment only for $k<\dof$, so the continuous energy score will only be defined once $\dof>1$. On the other hand, since, $g$ and all derivatives are in $L^2(\Rd)$, these distributions will be in $K^s$, and the Dirichlet energy is always well defined.

We test this numerically in dimension $d=16$ with the traditional energy distance ($H=1/2$) and fractional GFF distance with $\alpha=-5$.
The degrees of freedom for both $\mu$, $\nu$ is $\dof\in[0.25, 0.5, 1, 2, 3]$. In all cases, $\nu$ uses $\Gamma=I_{16}$, and $\mu$ uses $\Gamma_{\tau,\sigma}$ with $(i, j)$ entry
\begin{align*}
    \Gamma_{\tau,\sigma}(i,j)
    &= \sigma(i)\sigma(j)\exp\left\{ -(|i-j|/(16\tau))^2 \right\},
\end{align*}
For correlation length $\tau = [0.01, 0.03, 0.05, 0.1]$, and random scale $\sigma$. We generate
\begin{align*}
    \sigma\sim\calU([0.7, 0.8]),
    \qbox{and}
    \sigma\sim\calU([1.2, 1.3]).
\end{align*}
We show the signal to noise ratio in figure~\ref{fig:snr-sweep-with-dirichlet}. The Dirichlet energy is well defined, and finite in 32-bit, for $\dof \geq 0.5$. It was finite even for $\dof=0.25$, but even with 4M samples the sample energy was negative about one third of the time!
Using $\alpha=1$ was too singular to sample from.
The continuous energy \texttt{NaN} in 32-bit precision until $\dof > 1$.
We emphasize that sampling from this distribution was difficult.
For example, we had to rescale the Dirichlet energy to prevent float underflow, and if we made $\mu$ closer to $\nu$, energy would be negative too often for the statistics to be reliable.

\begin{figure}[h]
  \centering
  \includegraphics[width=0.9\textwidth]{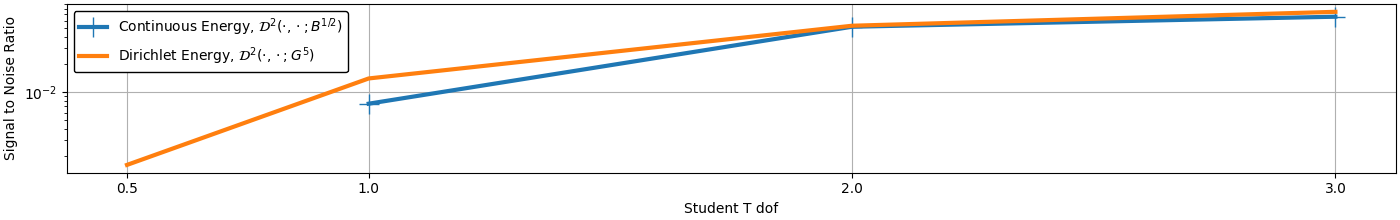}
  \caption{\textbf{Signal to noise sweep comparing continuous and Dirichlet energies:} SNR as a function of multivariate Student's T dof $\dof$. Computed with 4M samples using the kernel forms of the scores. Specfically, $H=1/2$ and $\alpha=5$ from section~\ref{section:summary-of-fractional-distances}. The Dirichlet energy is well defined even when the distribution has non-finite mean. The kernel form of continuous energy requires degrees of freedom $\dof > 2H=1$, otherwise results became \texttt{NaN} in 32-bit.}
    \label{fig:snr-sweep-with-dirichlet}
\end{figure}

\subsection{Signal to noise ratio and the Hurst index}
\label{section:SNR-and-H}
An unbiased estimate of $\D$ uses the kernel form and $N$ samples of $X$ and $Y$.
\begin{align*}
  \D_N :&=
  \frac{1}{2 N(N-1)}\sum_{\substack{n, m=1\\n\neq m}}^N\left(k(X_n,X_m) + k(Y_n,Y_m)\right)
  -\frac{1}{N}\sum_{n=1}^N k(X_n, Y_n).
\end{align*}
Alternatively, a biased estimate is
\begin{align*}
  \Dtilde_N :&=
  \frac{1}{2 N^2}\sum_{\substack{n, m=1\\n\neq m}}^N\left( k(X_n,X_m) + k(Y_n,Y_m) \right)
  -\frac{1}{N}\sum_{n=1}^N k(X_n, Y_n).
\end{align*}
The biased estimate is useful to study since
\begin{align*}
  \Dtilde_N
  &= \Exparg{U}{\innerproduct{U}{\mu_N-\nu_N}^2},
\end{align*}
for empirical measures $\mu_N$, $\nu_N$.

We want to study the relation between the random field $U$ and the signal to noise ratio when using an unbiased estimate. Define $\gamma := \mu_N - \nu_N - (\mu-\nu)$, and use $\Exp{\cdot}$ to denote expectation over both samples and fields. The signal is
\begin{align*}
  \Exparg{\gamma}{\D_N} &= \Exp{\innerproduct{U}{\mu-\nu}^2}.
\end{align*}
As for the noise, we can use $\Var{\Dtilde_N}$ as a stand-in for $\Var{\D_N}$, incurring an error that is $O(1/N)$ smaller than the leading term (which itself is $O(1/N)$). To that end, note
\begin{align*}
  \Dtilde_N
  &= \Exp{\innerproduct{U}{\mu-\nu}^2}
  + 2\Exparg{U}{\innerproduct{U}{\mu-\nu}\innerproduct{U}{\gamma}}
  + \Exparg{U}{\innerproduct{U}{\gamma}^2}, \\
  \Exparg{\gamma}{\Dtilde_N}
  &= \Exp{\innerproduct{U}{\mu-\nu}^2}
  + \Exp{\innerproduct{U}{\gamma}^2}.
\end{align*}

If the difference of measures $\mu-\nu$ is much smaller than the finite sample error $\gamma$, we ignore the $\innerproduct{U}{\mu-\nu}\innerproduct{U}{\gamma}$ term to get
\begin{align}
    \mathrm{SNR}_{\mu-\nu\ll\gamma}^2
  &\approx
  \frac{\left( \Exp{\innerproduct{U}{\mu-\nu}^2}\right)^2}{\Var{\Exparg{U}{\innerproduct{U}{\gamma}}^2}}
  \approx
  \frac{\left( \Exp{\innerproduct{U}{\mu-\nu}} \right)^2}{2\Var{\Exparg{U}{\innerproduct{U}{\nu - \nu_N}}^2}},
  \label{align:SNR2-eta-greater-than-q}
\end{align}
where the second approximation is due to $\mu - \mu_N\approx \nu - \nu_N$ (in distribution).
With the approximation~\eqref{align:SNR2-eta-greater-than-q} the noise is independent of $X$. If we consider different candidate distributions $X = X_\theta$ (depending on parameters $\theta$), the SNR will only depend on the signal $\Exp{\innerproduct{U}{\mu-\nu}^2} = \D(X_\theta, Y)$. Using this fact and our moment analysis of section~\ref{section:extending-with-continuous-fractional-fields} we can conjecture the effect of perturbations of the first and second moments on SNR. We expect SNR to favor larger $H$ when perturbing the mean, and small $H$ when perturbing the second moment.

This hypothesis is born out when we set $\nu = \calN(0, 1)$, and let $\mu$ be various perturbations. See figure~\ref{fig:snr-sweep}. For each perturbation, we computed the 32 sample unbiased distance $\D(\mu-\nu; B^H)$ for a range of $H\in(0, 1)$. This was repeated one million times, and the mean score and standard deviation were estimated. Note that the standard deviation was the same when the biased score was used (not shown). The resultant signal to noise ratio can be used to choose an optimal $H$ for each situation. When the variance is perturbed, lower $H$ is optimal, and when the mean is perturbed, higher $H$ is.
\begin{figure}[h]
  \centering
  \includegraphics[width=0.95\textwidth]{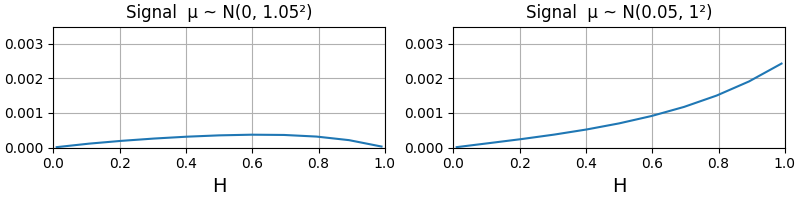}
  \includegraphics[width=0.95\textwidth]{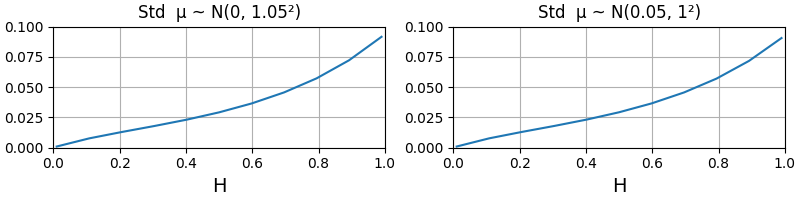}
  \includegraphics[width=0.95\textwidth]{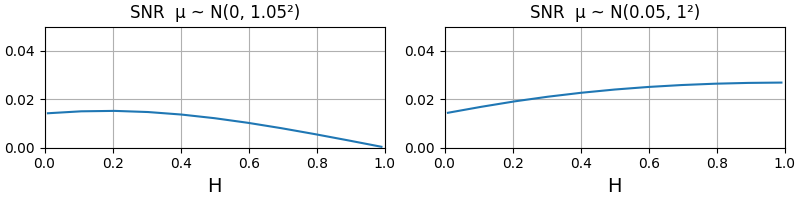}
  \caption{\textbf{SNR Sweep:} Shows that the signal, $\D(\mu-\nu; B^H)$ is the differentiating factor in SNR calculations, since the standard deviation is mostly perturbation independent. The signal shape (with respect to $H$) is as expected due to the path-wise heuristics from section~\ref{section:SNR-and-H}.}
    \label{fig:snr-sweep}
\end{figure}

\section{Other examples}
\label{section:other-examples}

\subsection{Metrics on discrete spaces}
\label{section:scores-on-discrete-spaces}
In evaluation of discrete forecasts, such as ``rainfall greater than 2mm'', it is common to use the Brier score. If the forecast probability is $p$, and the observation is a binary event $Y$, the \emph{Brier score} is $(p - Y)^2$. If $\Exp{Y=1}=q$, then the expected Brier score is
\begin{align*}
  \Exp{(p - Y)^2}
  &= (p - q)^2 + q(1-q).
\end{align*}
Up to a term independent of the forecast, this is equal to a distance $(p - q)^2$.

The inducing field on $\calX = \left\{ 0, 1 \right\}$ is
\begin{align*}
  D(x) = D_0\one_{x=0} + D_1\one_{x=1},
\end{align*}
where $D_0, D_1$ are uncorrelated with $\Exp{D_i}=0$, $\Exp{D_i^2}=1/2$. Then
\begin{align*}
  \D(p, q; D)
  &= \Exparg{D}{\Exparg{X,Y}{D(X)-D(Y)\g D}^2} \\
  &= \Exparg{D}{ \left\{ D_0( (1-p) - (1-q)) + D_1(p-q) \right\}^2 } \\
  &= (p-q)^2.
\end{align*}

This can be generalized to $\calX=\left\{ 1, 2, \ldots \right\}$ with uncorrelated sequence $\left\{ D_1,D_2,\ldots \right\}$ satisfying
\begin{align*}
  D(x) &= \sum_{k=1}^\infty D_k\one_{x=k},
  \qbox{where}
  \Exp{D_k}\equiv0,\quad \Exp{D_k^2} = 2^{-k}.
\end{align*}
Then,
\begin{align*}
  \D(\mu-\nu; D)
  &= \sum_{k=1}^\infty \frac{1}{2^k} \left( \mu(k) - \nu(k) \right)^2,
\end{align*}
is characteristic over differences of finite measures on $\left\{ 1,2,\ldots \right\}$.


\subsection{Additive Brownian motion}
\label{section:additive-brownian-motion}
Let $B_1,\ldots,B_d$ be independent one dimensional Brownian motions, and define the \emph{additive Brownian motion} as
\begin{align*}
    A(x) &= \sum_{i=1}^d B_i(x_i).
\end{align*}
Since each field $B_i$ is constant over components $j\neq i$, the inner product will factor nicely.
With $\mu_i,\nu_i$ the $i^{th}$ marginals,
\begin{align*}
    \innerproduct{A}{\mu-\nu}
    &= \sum_{i=1}^d \innerproduct{B_i}{\mu_i - \nu_i}.
\end{align*}
Using independence of the $B_i$, the distance factors into a sum of one dimensional Brownian distances.
\begin{align*}
    \D(\mu-\nu; A)
    &= \sum_{i=1}^d \Exparg{B_i}{\innerproduct{B_i}{\mu_i - \nu_i}^2}
    = \sum_{i=1}^d \D(\mu_i, \nu_i; B).
\end{align*}
The kernel form replaces the $L^2$ norm of the energy distance with the $L^1$ norm,
\begin{align*}
    \D(\mu-\nu; A)
    &= \E\|X - Y\|_1
    - \frac{1}{2}\E\|X - X'\|_1
    - \frac{1}{2}\E\|Y - Y'\|_1.
\end{align*}
This distance clearly is \emph{not} characteristic. Nonetheless, it has been useful as a loss function to learn weather and climate models~\cite{Kochkov2024-we,Lang2024-ky}

\section{Natural assumptions and requirements for sample paths}
\label{section:natural-assumptions-for-the-field}

The desires for simplicity of representation, and effective finite sample estimation of $\D$, lead to natural assumptions and requirements on the sample paths.

Since $(\mu-\nu)(\calX)=0$, the form $\innerproduct{U}{\mu-\nu}$ is unaltered if $U$ is shifted by a constant. So we may as well assume $\Exp{U}=0$, or if sample paths are continuous, we may instead assume $U(x_0)=0$, for some $x_0\in\calX$.

The variogram form~\eqref{align:distance-stochastic-integral} shows that the score is invariant to shifts $X\mapsto X + \delta$, $Y\mapsto Y + \delta$ for all $X$, $Y$ just when $U(x) - U(y)$ and $U(x+\delta)- U(y + \delta)$ have identical first and second moments for all $\delta\in\Rd$.
Therefore, a desiderata of shift invariance leads to require $U$ has wide sense stationary increments.

The second desiderata is that the finite sample signal to noise ratio be invariant under a rescaling $X\mapsto\sigma X$ and $Y\mapsto\sigma Y$.
Analysis from section~\ref{section:SNR-and-H} shows this consideration will require $U(\sigma x) \sim g(\sigma)U(x)$ for some function $g$. The consistency criteria $U(\sigma_1 \sigma_2 x) \sim g(\sigma_1)g(\sigma_2)U(x) \sim g(\sigma_1 \sigma_2)U(x)$ means $g(\sigma) = \sigma^H$ for some $H\in\Rone$. In other words, we will require $U(\sigma x) \sim \sigma^HU(x)$.

These first two desiderata significantly restrict the available Gaussian fields.
In particular, if we require the field to have stationary increments, $\lim_{x\to0} U(x)\to U(0)$ in distribution, and finite variance, then we must have $H\in[0, 1]$ (\cite{Cohen2013-zp} proposition 3.2.1).
In one dimension, the only finite-variance field satisfying this is (a multiple of) continuous fractional Brownian motion (\cite{Cohen2013-zp} corollary 3.2.1), as detailed in section~\ref{section:extending-with-continuous-fractional-fields}.
In dimension $d\geq1$, a finite-variance field satisfying this must have (\cite{Cohen2013-zp} section 3.3.1) spectral density (in polar coordinates)
\begin{align*}
    \varphi(r\theta) &\propto \frac{1}{r^{d + 2H}} S(\theta).
\end{align*}
Setting $S$ to Lebesgue measure on $\dsphere$ results in the multi-dimensional continuous fractional fields of section~\ref{section:extending-with-continuous-fractional-fields}.
Clearly though, in multiple dimensions there are other possibilities.

The third desiderata will determine what space sample paths live in.
In practice, $\mu$, $\nu$ are probability measures on $\calX$.  Their difference $\mu-\nu$ therefore takes values in some subset of finite signed Borel measures with total mass zero.
If $\calX$ is compact, then the measures are in the Banach dual $\calB^\ast(\calX)$ to the continuous functions $f\in C(\calX)$, equal to zero at some point.
To generalize this correspondence, we start by assuming $\mu-\nu\in\calB^\ast$ for some Banach space $\calB$. The pairing $\innerproduct{U}{\mu-\nu}\in\Rone$ is well defined then if $U\in \calS\subset (\calB^\ast)^\ast$. Typically the \emph{bidual} $(\calB^\ast)^\ast$ is difficult to characterize, so we would be glad to constrain $U$ to some subset. We are helped out by our third desiderata that sample score estimates converge. In other words, replacing $\mu$ with its empirical measure $\mu_n$, we desire $\innerproduct{U}{\mu-\mu_n}\to0$ for almost every $U$.
This can be rephrased as requiring $\calS$ to be a set of weak-$\ast$ continuous functionals on $\calB^\ast$. The largest space of such functionals is the original space, $\calB$ (theorem 4.20~\cite{Reed1980-gk}). Our sample estimate convergence desiderata has therefore led us to require $U$ takes values in a Banach space $\calB$, and the difference of measures $\mu-\nu\in\calB^\ast$.

A related assumption is that $\E\|U\|_\calB^2<\infty$, since then
\begin{align*}
  \D(\mu_n, \nu) &= \Exparg{U}{\innerproduct{U}{\mu_n-\nu}^2}
  \leq \E\|U\|_\calB^2 \|\mu_n-\nu\|_{\calB^\ast}^2,
\end{align*}
is well defined. In particular, if $\mu_n\to\nu$ strongly, then $\D(\mu_n,\nu)\to0$.
See~\cite{Vayer2023-bx,Modeste2024-xa} for connections between $\D$ and Wasserstein distances.

\section{Support theorems for characteristic fields}
\label{section:forgery-theorems}
Given Banach space $\calB$, we say $f\in\calB$ is in the \emph{support} of $U$ if, for every $\eps>0$, $\rmP[\|U - f\|_\calB < \eps] > 0$. Let $\support$ be the support of $U$ in $\calB$, and $\supportspan$ its linear span.
\begin{theorem}
  \label{theorem:characteristic-implies-dense}
  Suppose sample paths of $U$ take values in $\calB$ and $\E\|U\|_\calB^2<\infty$. Then $U$ is characteristic over $\calB^\ast$ if and only if $\supportspan$ is dense in $\calB$.
\end{theorem}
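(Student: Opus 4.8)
The plan is to reduce both implications to a Hahn--Banach separation argument, built on two elementary observations: that $\D(\eta)=0$ is equivalent to the scalar random variable $\innerproduct{U}{\eta}$ vanishing almost surely, and that the support $\support$ is the smallest closed subset of $\calB$ carrying full probability.

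First I would record those two facts. For $\eta\in\calB^\ast$ the map $f\mapsto\innerproduct{f}{\eta}$ is linear and continuous on $\calB$, so $\innerproduct{U}{\eta}$ is a real random variable, and the hypothesis $\E\|U\|_\calB^2<\infty$ gives $\D(\eta)=\Exparg{U}{\innerproduct{U}{\eta}^2}\leq\|\eta\|_{\calB^\ast}^2\,\E\|U\|_\calB^2<\infty$; hence $\D(\eta)=0$ exactly when $\innerproduct{U}{\eta}=0$ a.s. Moreover, assuming the law of $U$ on $\calB$ is Radon (e.g.\ $\calB$ separable), $\support$ is closed with $\rmP[U\in\support]=1$ and is contained in every closed full-probability subset of $\calB$. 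Since $\{f\st\innerproduct{f}{\eta}=0\}$ is a closed subspace that has full probability precisely when $\innerproduct{U}{\eta}=0$ a.s., we obtain: $\D(\eta)=0$ iff $\support\subseteq\{f\st\innerproduct{f}{\eta}=0\}$, equivalently (by linearity and continuity of $\eta$) iff $\supportspanclosure\subseteq\{f\st\innerproduct{f}{\eta}=0\}$.

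Now the equivalence falls out. If $U$ is not characteristic over $\calB^\ast$, pick $\eta\neq0$ with $\D(\eta)=0$; then $\supportspanclosure$ lies in the proper closed subspace $\{f\st\innerproduct{f}{\eta}=0\}$, so $\supportspan$ is not dense in $\calB$. Conversely, if $\supportspan$ is not dense, then $\supportspanclosure$ is a proper closed subspace, and Hahn--Banach produces $\eta\in\calB^\ast$, $\eta\neq0$, vanishing on $\supportspanclosure\supseteq\support$; by the previous paragraph $\D(\eta)=0$ while $\eta\neq0$, so $U$ is not characteristic. Taking contrapositives yields both directions of the stated ``if and only if''.

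The only step that is more than routine functional analysis is the claim that the topological support $\support$ carries full probability, which is what licenses passing between ``$\innerproduct{U}{\eta}=0$ a.s.'' and ``$\support\subseteq\{f\st\innerproduct{f}{\eta}=0\}$''. This holds whenever the distribution of $U$ is tight, in particular when $\calB$ is separable, so I would either state separability as a standing hypothesis or remark that it is automatic for the separable path spaces used in the paper. The remaining ingredients --- continuity of $\eta\in\calB^\ast$, closedness of its kernel, minimality of $\support$ among closed full-measure sets, and the Hahn--Banach separation theorem --- are standard.
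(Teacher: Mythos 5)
Your proof is correct, and it rests on the same two pillars as the paper's argument --- Hahn--Banach separation and the relationship between the law of $U$ and its topological support --- but you organize both implications as contrapositives of a single clean equivalence, $\D(\eta)=0 \iff \support\subseteq\ker\eta \iff \supportspanclosure\subseteq\ker\eta$. The paper instead argues each direction head-on: for ``characteristic $\Rightarrow$ dense'' it shows the annihilator of $\supportspanclosure$ is trivial by locating a support point inside the positive-measure event $\left\{\innerproduct{U}{\eta}\neq0\right\}$ (lemma~\ref{lemma:positive-measure-subsets-contain-support-points}), and for the converse it runs a quantitative approximation argument, producing an explicit support point $v_j$ with $\innerproduct{v_j}{\eta}\neq0$ and an explicit open ball of positive probability on which $|\innerproduct{U}{\eta}|>0$. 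Your converse direction is softer (minimality of $\support$ among closed full-measure sets, which needs no tightness) and arguably cleaner. The one substantive issue you flag --- that ``$\support\subseteq\ker\eta$ implies $\innerproduct{U}{\eta}=0$ a.s.'' requires $\rmP[U\in\support]=1$, hence a Radon or separability hypothesis --- is well taken, and in fact the paper has exactly the same hidden dependence: the proof of lemma~\ref{lemma:positive-measure-subsets-contain-support-points} bounds the measure of an uncountable union of null open sets by an uncountable sum of zeros, which is legitimate only when a countable subcover exists (Lindel\"of, e.g.\ $\calB$ separable). So your explicit caveat is not a weakness of your argument relative to the paper's; it is a hypothesis both proofs silently need, and it holds in all of the paper's examples ($C(\calX)$ for compact metric $\calX$, the weighted spaces $C_{0,\alpha}$, and the Sobolev scales $K^{-s}$ are all separable).
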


Since the support of a Gaussian process is a closed linear subspace, \cite{Vakhania1975-ms}, we have
\begin{corollary}
  Gaussian field $U$ with sample paths~\as~in $\calB$ is characteristic over $\calB^\ast$ if and only if $\support = \calB$. In that case, for every $f\in\calB$, $\eps>0$,
    $\rmP\left[ \|U - f\|_\calB < \eps \right] > 0$.
  \label{corollary:characteristic-gaussian-implies-forgery}
\end{corollary}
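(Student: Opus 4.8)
The plan is to read the corollary off from Theorem~\ref{theorem:characteristic-implies-dense} once the shape of $\support$ is pinned down. The one external input is the theorem of \cite{Vakhania1975-ms}: the support of a centered Gaussian random element of a Banach space is a \emph{closed linear subspace}. Since $\innerproduct{U}{\mu-\nu}$ is insensitive to shifting $U$ by a constant, we normalize $\Exp{U}=0$ as in Section~\ref{section:natural-assumptions-for-the-field}, so that $\support$ is a closed linear subspace of $\calB$. I regard confirming this centering reduction, together with checking that $U$ is a separably valued / measurable $\calB$-valued random element so that \cite{Vakhania1975-ms} applies (which is implicit whenever $\E\|U\|_\calB^2<\infty$ is meaningful), as the only real friction in the argument; if one declines to center, $\support$ is merely a closed affine subspace and a little extra care is needed.

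Granting that $\support$ is a closed linear subspace, passing to the linear span does nothing: $\supportspan=\support$, and hence also $\supportspanclosure=\support$. Now invoke Theorem~\ref{theorem:characteristic-implies-dense}, whose hypotheses ($\calB$-valued sample paths and $\E\|U\|_\calB^2<\infty$) are exactly those assumed here: $U$ is characteristic over $\calB^\ast$ if and only if $\supportspan$ is dense in $\calB$. But $\supportspan=\support$ is closed, and a closed subset that is dense in $\calB$ must equal $\calB$. This gives the claimed equivalence: $U$ is characteristic over $\calB^\ast$ if and only if $\support=\calB$.

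For the final assertion, suppose $\support=\calB$. Then every $f\in\calB$ belongs to $\support$, and by the very definition of the support this means $\rmP[\|U-f\|_\calB<\eps]>0$ for every $\eps>0$. No computation is required: the corollary is just Theorem~\ref{theorem:characteristic-implies-dense} packaged with the Gaussian support dichotomy of \cite{Vakhania1975-ms}, plus the trivial observation that a closed subspace is dense precisely when it is the whole space.
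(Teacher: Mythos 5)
Your argument is correct and is exactly the paper's: the corollary is deduced from Theorem~\ref{theorem:characteristic-implies-dense} by invoking \cite{Vakhania1975-ms} to conclude that $\support$ is a closed linear subspace, so that $\supportspan=\support$ and density forces $\support=\calB$, with the final assertion being the definition of support. Your added remarks on centering and on $U$ being a measurable $\calB$-valued element are reasonable housekeeping that the paper leaves implicit, but they do not change the route.
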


Support theorems of this type are not surprising, since $U$ is characteristic just when, for all $\eta$, $\innerproduct{U}{\eta} \neq0$ on a set of positive measure. So $U$ must be ``similar enough'' to arbitrary elements of $\calB^\ast$. For non-Gaussian fields we cannot expect the support to be dense. For example, suppose $\calX$ has basis $(\psi_1, \psi_2,\ldots)$, and let $\xi = (\xi_1,\xi_2,\ldots)$ be a sequence of mutually exclusive random variables, so that $\xi_k=1$ (and all others $=0$) with probability $2^{-k}$. Then $U = \sum_{k=1}^\infty \xi_k \psi_k$ is characteristic but paths are clearly not dense. The paths of $U$ are \emph{equal} to a basis, which is why our result is close to (but not the same as) ``the support is dense in a basis.''

\subsection{Examples of support theorems}
\label{section:forgery-examples}
We give some examples of support theorems here.
Support theorems for Brownian motion are common enough to be standard exercises (\cite{Morters2010-vx}, 1.8). Since the support of Gaussian field $U\in\calB$ is the closure of the reproducing kernel Hilbert space, a variety of support theorems could come out of that viewpoint on distances. See~\cite{van-der-Vaart2008-wv} lemma 5.1 and~\cite{Gretton2008-kv} theorem 3.

\subsubsection{Continuous paths and distributions on compact spaces}

Let $\calX$ be compact. Choosing $x_0\in\calX$, designate a subspace of continuous functions
\begin{align*}
  C_{x_0}(\calX) &= \left\{ f\in C(\calX)\st f(x_0)=0 \right\}.
\end{align*}

Riesz-Markov and the dual relation for quotients (\cite{Reed1980-gk} theorem 4.14, and problem 3.30) give
\begin{align*}
    (C_{x_0}(\calX))^\ast &= \F(\calX) = \left\{ \mbox{finite signed Borel measures $\eta$ on $\calX$} \st \eta\left( \calX \right)=0 \right\}.
\end{align*}
Corollary~\ref{corollary:characteristic-gaussian-implies-forgery} then says, a Gaussian field with paths~\as~in $C_{x_0}(\calX)$ is characteristic over $\F(\calX)$ just when it its support is dense.

As an application, since Brownian paths on $[0, T]$ are characteristic over $\F([0, T])$ (corollary~\ref{corollary:continuous-fractional-fields-are-characteristic}), their support in $C_{0}([0, T])$ is equal to $C_0([0, T])$.

\subsubsection{Continuous paths of sub-polynomial growth and distributions with finite moments}
For $\alpha > 0$, set
\begin{align*}
  C_{0,\alpha} :&= \left\{ U \in C(\Rd) \st U(0) = 0,\qbox{and}\lim_{\|x\|\to\infty}\frac{\|U(x)\|}{1 + \|x\|^\alpha} = 0 \right\}.
\end{align*}
The fractional fields of section~\ref{section:extending-with-continuous-fractional-fields} have variance $\E B^H(x)^2=\|x\|^{2H}$, and so $B^H(x) / (1 + \|x\|^{H + \eps})\to0$~\as~for any $\eps>0$ and thus $B^H\in C_{0,\alpha}$ if $H<\alpha$. See \cite{Orey1972-am} or \cite{Cohen2013-zp} theorem 3.2.4.

To find the dual, define $(Mf)(x) = f(x)(1 + \|x\|^\alpha)^{-1}$, and set
\begin{align*}
  A :&= \left\{ f \st f\in C(\Rd),\, (Mf)(x)\to0 \mbox{ as } \|x\|\to\infty \right\},
\end{align*}
If we push $A$ forward by $M$, we get $C_0(\Rd)$, the continuous functions vanishing at infinity.
The dual of $C_0(\Rd)$ is $\F(\Rd)$ (\cite{Rudin1987-yf} theorem 6.19).
Therefore $A^\ast$ is the signed Borel measures $\eta$ with $\int (1 + \|x\|)^\alpha\deta<\infty$.
Finally, $C_{0,\alpha}$ is isomorphic to the quotient space $(A / \left\{ \mbox{constant functions} \right\})$.
So, using the dual relation for quotients (\cite{Reed1980-gk} problem 3.30)
\begin{align*}
    (C_{0,\alpha})^\ast &= \left\{ \eta\in\F(\Rd) \st \int (1 + \|x\|^\alpha)\dabseta(x) < \infty,\, \int \deta(x)=0 \right\}.
\end{align*}
Since our continuous Gaussian fractional fields $B^H$, are characteristic over $(C_{0,\alpha})^\ast$ once $H < \alpha$ (corollary~\ref{corollary:continuous-fractional-fields-are-characteristic}), their support in $C_{0,\alpha}$ is $C_{0,\alpha}$.

\subsubsection{Gaussian free fields and smooth densities}
We will follow section~\ref{section:extending-with-GFF}, and identify members of Sobolev spaces $q\in K^s$ with their induced measure. Suppose $\calX\subset\Rd$ is compact, and $s > d/2 - 1$. Then the GFF is in $K^{-s}$, and is characteristic over $K^s = (K^{-s})^\ast$. It's support in $K^{-s}$ is therefore $K^{-s}$.

\subsection{Proofs of support theorems}
\label{section:forgery-proofs}

Given Banach space $\calB$ and subset $A\subset \calB$, the \emph{annihilator}
\begin{align*}
  A^\perp :&= \left\{ v\in \calB^\ast\st \innerproduct{a}{v}=0,\mbox{ for every }a\in A \right\}.
\end{align*}
\begin{lemma}
  Suppose $\calB$ is a Banach space, and $A\subset \calB$ is a closed subspace.
  Then $A^\perp=\left\{ 0 \right\}$ implies $A=\calB$.
  \label{lemma:trivial-annihilator-means-its-the-whole-space}
\end{lemma}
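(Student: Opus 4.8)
The plan is to prove the contrapositive: assuming $A \neq \calB$, I will exhibit a nonzero element of $A^\perp$. The only tool needed is the Hahn--Banach separation theorem in its subspace form.

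First I would pick a point $x \in \calB \setminus A$. Since $A$ is a \emph{closed} subspace, its complement is open, so $\mathrm{dist}(x, A) > 0$. The standard corollary of Hahn--Banach (extending the zero functional on $A$, then controlling its value at $x$ via the sublinear functional $p(\cdot) = \mathrm{dist}(\cdot, A)$, or equivalently applying Hahn--Banach on the quotient $\calB / A$, which is a nontrivial Banach space since $A$ is closed) then yields a bounded linear functional $v \in \calB^\ast$ with $v \equiv 0$ on $A$ and $v(x) \neq 0$. By definition $v \in A^\perp$, and $v \neq 0$ because $v(x) \neq 0$. Hence $A^\perp \neq \{0\}$, which is the contrapositive of the claim. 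Equivalently stated directly: if $A^\perp = \{0\}$, no such $x$ can exist, so $A = \calB$.

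I do not anticipate a genuine obstacle here --- this is a textbook consequence of Hahn--Banach --- but the one point requiring care is that the argument genuinely uses closedness of $A$: without it, $\mathrm{dist}(x, A)$ could be zero for $x \notin A$, the quotient $\calB/A$ would fail to be Hausdorff, and the separating functional would not exist (indeed a dense proper subspace has trivial annihilator). So the single substantive step is invoking the correct form of Hahn--Banach, namely that a closed proper subspace of a Banach space is the intersection of the kernels of the nonzero functionals annihilating it, and in particular admits at least one such functional.
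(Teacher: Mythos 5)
Your proposal is correct and follows essentially the same route as the paper: both argue the contrapositive, taking a point outside the closed proper subspace $A$ and invoking the Hahn--Banach separation corollary to produce a nonzero functional annihilating $A$. Your write-up merely spells out the role of closedness (positive distance, Hausdorff quotient) in a bit more detail than the paper's one-line citation of the same corollary.
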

\begin{proof}
  If $A\subsetneq \calB$, then by Hahn Banach (see~\cite{Reed1980-gk}, chapter 3, corollary 3), there exists nonzero $v\in \calB^\ast$, $\innerproduct{a}{v}=0$, for every $a\in A$, whence $A^\perp\neq \left\{ 0 \right\}$.
\end{proof}

\begin{lemma}
  Subsets $A\subset\calB$ with positive measure contain support points. That is,
  $\rmP\left[ U\in A \right] > 0$ implies $A \cap \support\neq\emptyset$
  \label{lemma:positive-measure-subsets-contain-support-points}
\end{lemma}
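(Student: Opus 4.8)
The plan is to reduce the statement to the single fact that the complement of the support carries no mass, i.e.\ $\rmP[U\in\calB\setminus\support]=0$. Granting this, the lemma is immediate by contraposition: if $A\cap\support=\emptyset$ then $\{U\in A\}\subseteq\{U\in\calB\setminus\support\}$, so $\rmP[U\in A]\le\rmP[U\in\calB\setminus\support]=0$ (reading $\rmP[U\in A]$ as outer probability if $A$ is not assumed measurable), contradicting the hypothesis $\rmP[U\in A]>0$. Hence $A\cap\support\neq\emptyset$.

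So the content is in showing $\calB\setminus\support$ is null for the law of $U$. First I would check that $\support$ is closed: if $f\notin\support$ there is $\eps_f>0$ with $\rmP[\|U-f\|_\calB<\eps_f]=0$, and any $g$ with $\|g-f\|_\calB<\eps_f/2$ has $\{g'\st\|g'-g\|_\calB<\eps_f/2\}\subseteq\{g'\st\|g'-f\|_\calB<\eps_f\}$, so $g\notin\support$ as well. Thus $\calB\setminus\support$ is open and is covered by the balls $B(f,\eps_f/2):=\{g\st\|g-f\|_\calB<\eps_f/2\}$ with $f\notin\support$, each of which satisfies $\rmP[U\in B(f,\eps_f/2)]=0$.

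Next I would extract a countable subcover. If $\calB$ is separable (this holds in all the examples of Section~\ref{section:forgery-examples}, and is the natural standing hypothesis for $U$ to possess a Borel law), it is Lindel\"of, so there is a countable subcover $\{B(f_n,\eps_{f_n}/2)\}_{n\ge1}$, and then $\rmP[U\in\calB\setminus\support]\le\sum_{n\ge1}\rmP[U\in B(f_n,\eps_{f_n}/2)]=0$. If one prefers not to assume separability, the same conclusion follows from tightness of the law of $U$: given $\delta>0$, choose compact $K_\delta\subset\calB$ with $\rmP[U\notin K_\delta]<\delta$; since $K_\delta$ is compact, hence Lindel\"of, the cover of $K_\delta\cap(\calB\setminus\support)$ by the null balls above has a countable subcover, so $\rmP[U\in K_\delta\cap(\calB\setminus\support)]=0$, whence $\rmP[U\in\calB\setminus\support]<\delta$; letting $\delta\downarrow0$ finishes it.

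I do not expect a genuine obstacle here; the only point that needs care is the tacit regularity assumption on the distribution of $U$ (separability of $\calB$, or equivalently for our purposes tightness of its law), which is exactly what licenses passing from ``every defining ball is null'' to ``their union $\calB\setminus\support$ is null'', together with the harmless use of outer probability when $A$ is not assumed Borel.
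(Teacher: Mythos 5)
Your proof is correct and follows essentially the same route as the paper's: cover the portion of $A$ lying outside $\support$ by open balls of probability zero and conclude by subadditivity. In fact yours is the more careful version --- the paper's proof applies subadditivity directly to the uncountable family $\{\calO_a\}_{a\in A}$, whereas you supply the countable subcover (via separability/Lindel\"of of $\calB$, or tightness of the law of $U$) that this step actually requires.
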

\begin{proof}
  Suppose $A\cap\support=\emptyset$, then for every $a\in A$, there exists open $\calO_a\subset \calB$ with $\rmP\left[ U\in\calO_a \right]=0$. Then,
  \begin{align*}
    \rmP\left[ U\in A \right] &\leq \rmP\left[ U\in\bigcup_{a\in A}\calO_a \right]
    \leq \sum_{a\in A}\rmP\left[ U\in\calO_a \right] = 0.
  \end{align*}
\end{proof}

We can now prove both directions of~\eqref{theorem:characteristic-implies-dense}.
\begin{proof}[Proof of theorem~\eqref{theorem:characteristic-implies-dense}]
  Suppose $U$ is characteristic over $\calB^\ast$. Then for every nonzero $\eta\in\calB^\ast$, $\rmP\left[ \innerproduct{U}{\eta} \neq 0\right] > 0$. By lemma~\ref{lemma:positive-measure-subsets-contain-support-points} $\left\{ \innerproduct{U}{\eta} \neq 0 \right\}$ contains an support point. Therefore
  \begin{align*}
    \left\{ 0 \right\}
    &=\left\{ \eta\in\calB^\ast\st \innerproduct{v}{\eta}=0 \mbox{ for every } v\in\support \right\} \\
    &\supset
    \left\{ \eta\in\calB^\ast\st \innerproduct{v}{\eta}=0 \mbox{ for every } v\in\supportspanclosure \right\},
  \end{align*}
  where $\supportspanclosure$ is the closure in $\calB$ of $\supportspan$.
  Therefore, the annihilator $\supportspanclosure^\perp=\left\{ 0 \right\}$, and then by lemma~\ref{lemma:trivial-annihilator-means-its-the-whole-space}, $\supportspanclosure=\calB$.

Conversely, suppose $\supportspan$ is dense in $\calB$.
  Let nonzero $\eta\in\calB^\ast$, and choose $v\in\calB$ with $\innerproduct{v}{\eta} > 0$.
  Since $\supportspan$ is dense, there exists support points $\left\{ v_1,\ldots,v_N \right\}$ so that
  \begin{align*}
    \left\|v - \sum_{n=1}^N \beta_n v_n\right\|
    & < \frac{|\innerproduct{v}{\eta}|}{\|\eta\|}.
  \end{align*}
  Therefore,
  \begin{align*}
    \sum_{n=1}^N \beta_n\innerproduct{v_n}{\eta}
    &= \innerproduct{v}{\eta} - \innerproduct{v - \sum_{n=1}^N\beta_nv_n }{\eta}
    &\geq \innerproduct{v}{\eta} - \left\|v - \sum_{n=1}^N\beta_nv_n\right\| \|\eta\|
    > 0,
  \end{align*}
  which implies $|\innerproduct{v_j}{\eta}| > 0$ for some $v_j$.

  Define the open set of paths
  \begin{align*}
    \calO &= \left\{ \|U - v_j\| < \frac{|\innerproduct{v_j}{\eta}|}{\|\eta\|} \right\}.
  \end{align*}
  It follows that for $U\in\calO$,
  \begin{align*}
    \left|\innerproduct{U}{\eta}\right| 
    &= |\innerproduct{v_j}{\eta} - \innerproduct{U - v_j}{\eta}|
    \geq |\innerproduct{v_j}{\eta}| - \|U - v_j\|\|\eta\|
    > 0.
  \end{align*}
  Moreover, since $v_j$ is a support point, $\rmP\left[ \calO \right] = \delta > 0$.
  We can therefore conclude
  \begin{align*}
    \rmP\left[ |\innerproduct{U}{\eta}| > 0 \right]
    &\geq \rmP\left[ |\innerproduct{U}{\eta}| > 0 \g U\in\calO\right] \rmP\left[ U\in\calO \right]
    = 1\,\delta
    = \delta
    > 0,
  \end{align*}
  and $U$ is characteristic.
\end{proof}

\section*{Acknowledgments} The author would like to acknowledge Srinivas Vasudevan for many conversations about stochastic processes in general, as well as some specific to this work.

\bibliographystyle{plain}                                                                          
\bibliography{kernel-distances-stochastic-integrals}

\end{document}